\documentclass[12pt]{amsart}
\usepackage[left=25truemm,right=25truemm,top=25truemm,bottom=25truemm]{geometry}
\usepackage{latexsym,amssymb,amsmath,amsthm,amscd,graphicx,esint,pgf,xcolor}

\makeatletter
\@namedef{subjclassname@2010}{%
  \textup{2010} Mathematics Subject Classification}
\makeatother

\newtheorem{thm}{Theorem}[section]
\newtheorem{cor}[thm]{Corollary}
\newtheorem{lem}[thm]{Lemma}

\theoremstyle{definition}

\newtheorem{rem}[thm]{Remark}


\numberwithin{equation}{section}

\newcommand{\cA}{{\mathcal A}}
\newcommand{\cB}{{\mathcal B}}

\newcommand{\cD}{{\mathcal D}}

\newcommand{\cL}{{\mathcal L}}
\newcommand{\cM}{{\mathcal M}}

\newcommand{\cS}{{\mathcal S}}
\newcommand{\cT}{{\mathcal T}}

\newcommand{\R}{{\mathbb R}}

\newcommand{\Z}{{\mathbb Z}}

\def\al{\alpha}

\def\dl{\delta}
\def\Dl{\Delta}

\def\lm{\lambda}
\def\Lm{\Lambda}

\def\0{\emptyset}
\def\1{{\bf 1}}
\def\6{\partial}
\def\8{\infty}

\def\lt{\left}
\def\rt{\right}
\def\ds{\displaystyle}
\def\ol{\overline}

\begin{document}

\title{Choquet integrals, Hausdorff content and sparse operators}

\author[N.~Hatano]{Naoya Hatano}
\address{
Department of Mathematics, Chuo University, 1-13-27, Kasuga, Bunkyo-ku, Tokyo 112-8551, Japan.
}
\email{n.hatano.chuo@gmail.com}

\author[R.~Kawasumi]{Ryota Kawasumi}
\address{
Minohara 1-6-3 (B-2), Misawa, Aomori 033-0033, Japan
}
\email{rykawasumi@gmail.com}

\author[H.~Saito]{Hiroki Saito}
\address{
College of Science and Technology, Nihon University,
Narashinodai 7-24-1, Funabashi City, Chiba, 274-8501, Japan
}
\email{saitou.hiroki@nihon-u.ac.jp}

\author[H.~Tanaka]{Hitoshi~Tanaka}
\address{
Research and Support Center on Higher Education for the hearing and Visually Impaired, 
National University Corporation Tsukuba University of Technology,
Kasuga 4-12-7, Tsukuba City, Ibaraki, 305-8521 Japan
}
\email{htanaka@k.tsukuba-tech.ac.jp}

\thanks{
The first named author is financially supported by 
a~Foundation of Research Fellows, The Mathematical Society of Japan.
The third named author is supported by 
Grant-in-Aid for Young Scientists (19K14577), 
the Japan Society for the Promotion of Science. 
The forth named author is supported by 
Grant-in-Aid for Scientific Research (C) (15K04918 and 19K03510), 
the Japan Society for the Promotion of Science.
Part of this work was supported by the Research Institute for Mathematical Sciences,
an International Joint Usage/Research Center located in Kyoto University 
and had been inspired during the third and forth named authors visited NCTS Workshop on Harmonic Analysis 2019.
}

\subjclass[2010]{Primary 42B25; Secondary 42B35.}

\keywords{
Choquet spaces;
dyadic Hausdorff content;
Orlicz-Morrey space;
Orlicz block space;
singular integral operator;
sparse operator.
}

\date{}

\begin{abstract}
Let $H^d$, $0<d<n$, be the dyadic Hausdorff content of the $n$-dimensional Euclidean space $\R^n$. 
It is shown that $H^d$ counts a~Cantor set of the unit cube $[0, 1)^n$ as $\approx 1$, 
which implies unboundedness of the sparse operator $\cA_{\cS}$ on the Choquet space $\cL^p(H^d)$, $p>0$. 
In this paper 
we verify that the sparse operator $\cA_{\cS}$ maps 
$\cL^p(H^d)$, $1\le p<\8$, into 
an associate space of Orlicz-Morrey space
${\cM^{p'}_{\Phi_0}(H^d)}'$, 
$\Phi_0(t)=t\log(e+t)$. 
We also give another characterizations of 
those associate spaces using the tiling $\cT$ of $\R^n$.
\end{abstract}

\maketitle

\section{Introduction}\label{sec1}
The purpose of this paper is to study the boundedness properties of the sparse operator $\cA_{\cS}$ 
defined on the Choquet space $\cL^p(H^d)$, $p\ge 1$. 

\subsection*{The Choquet space $\cL^p(H^d)$}
Let $\cD$ be the class of dyadic cubes 
$Q=2^{-k}(j+[0,1)^n)$, 
$k\in\Z, j\in\Z^n$. 
For the set $E\subset\R^n$, 
the dyadic Hausdorff content $H^d$ of $E$ is defined by
$$
H^d(E)
=
\inf\lt\{
\sum_j\ell(Q_j)^d:\,
E\subset\bigcup_jQ_j,
Q_j\in\cD
\rt\},
$$
where, 
the infimum is taken over all coverings of $E$ by 
countable families of dyadic cubes $Q_j\in\cD$ 
and $\ell(Q_j)$ stands for the side length of $Q_j$.
The Choquet integral with respect to $H^d$ is defined by
$$
\int_{\R^n}f\,{\rm d}H^d
=
\int_0^{\8}H^d(\{f>t\})\,{\rm d}t,
\quad f\ge 0.
$$
For $0<p<\8$,
we define the Choquet space $\cL^p(H^d)$ by 
the completion of the set of all continuous function having compact support 
$C_0(\R^n)$ with the functional
$$
\|f\|_{\cL^p(H^d)}
=
\lt(\int_{\R^n}|f|^p\,{\rm d}H^d\rt)^{\frac1p}.
$$
We notice that for $p\ge 1$ 
the functional satisfies the triangle inequality 
and thus it becomes a~norm 
(cf. \cite{STW}).
We also define the Choquet space $\cL^{\8}(H^d)$ 
by the functional 
$$
\|f\|_{\cL^{\8}(H^d)}
=
\inf\{t>0:\,H^d(\{|f|>t\})=0\}.
$$

\subsection*{The Morrey space $\cM^p(H^d)$}
The maximal operator $M_d$ of a~(signed) Radon measure $\mu$ is defined by 
$$
M_d\mu(x)
=
\sup_{Q\in\cD}
\frac{|\mu|(Q)}{\ell(Q)^d}\1_{Q}(x),
\quad x\in\R^n,
$$
where, 
$\1_{E}$ denotes the characteristic function of the set $E$.
For $1<p\le\8$, 
we define the Morrey space $\cM^p(H^d)$ by 
the set of all Radon measures $\mu$ satisfying 
$$
\|\mu\|_{\cM^p(H^d)}
=
\|M_d\mu\|_{\cL^p(H^d)}<\8.
$$

\subsection*{Adams' dual theorem}
The following is known as Adams' dual theorem
(cf. \cite{Ad,ST}).

\begin{quotation}
For $1\le p<\8$, 
the dual space of $\cL^p(H^d)$ is $\cM^{p'}(H^d)$.
More precisely, 
if $F$ is a~bounded linear functional on $\cL^p(H^d)$, 
then there exists a~Radon measure 
$\mu\in\cM^{p'}(H^d)$ such that
$$
F(\phi)
=
\int_{\R^n}\phi\,{\rm d}\mu,
\quad \phi\in C_0(\R^n),
$$
and,
$$
\|F\|
\approx
\|\mu\|_{\cM^{p'}(H^d)}.
$$
Here, $p'=\frac{p}{p-1}$, 
for $1<p<\8$, and 
$p'=\8$, for $p=1$, 
denotes the conjugate exponent of $p$. 
\end{quotation}

This theorem can be proved by the following Adams' dual inequality. 

\begin{equation}\label{1.1}
\int_{\R^n}f\,{\rm d}|\mu|
\le
\int_{\R^n}fM_d\mu\,{\rm d}H^d,
\quad f\ge 0.
\end{equation}

\begin{proof}[Proof of \eqref{1.1}]
Set $G=fM_d\mu$. Then,
$$
\int_{\R^n}f\,{\rm d}|\mu|
=
\int_0^{\8}
\frac{|\mu|}{M_d\mu}(\{G>t\})
\,{\rm d}t.
$$
Letting $\{Q_j\}\subset\cD$ be an~any covering of $\{G>t\}$, 
we have that
$$
\frac{|\mu|}{M_d\mu}(\{G>t\})
\le
\sum_j
\frac{|\mu|}{M_d\mu}(Q_j)
\le
\sum_j\ell(Q_j)^d,
$$
where we have used a~simple trick
$$
\frac{|\mu|(Q_j)}{\ell(Q_j)^d}
\le
M_d\mu(x),\quad x\in Q_j.
$$
These entail 
$$
\int_{\R^n}f\,{\rm d}|\mu|
\le
\int_{\R^n}G\,{\rm d}H^d
=
\int_{\R^n}fM_d\mu\,{\rm d}H^d.
$$
\end{proof}

\subsection*{Sparse operators}
Following \cite{Le}, we introduce sparse operators.
By a~cube in $\R^n$ we mean a~half-open cube 
$Q=\prod_{i=1}^n[a_i, a_i+h)$, $h>0$. 
We say that a~family $\cS$ of cubes from $\R^n$ is $\eta$ sparse, $0<\eta<1$, 
if for every $Q\in\cS$, 
there exists a~measurable set 
$E_{Q}\subset Q$ such that 
$|E_{Q}|\ge\eta|Q|$, and 
the sets $\{E_{Q}\}_{Q\in\cS}$ 
are pairwise disjoint. 
Given a~sparse family $\cS$, 
define a~sparse operator $\cA_{\cS}$ by
$$
\cA_{\cS}f(x)
=
\sum_{Q\in\cS}
\fint_{Q}f\cdot\1_{Q}(x),
\quad x\in\R^n,
$$
where, the barred integral $\fint_{Q}f$ 
stands for the usual integral average of $f$ over $Q$. 

\subsection*{Cantor family}
Fix $0<d<n$. 
Let $0<\dl<1$ be the solution to the equation 
$2^{n-d}(1-\dl)^d=1$, and 
consider the following Cantor family 
(cf. \cite{CPSS}):

\begin{itemize}
\item[$\bullet$]
Let $Q_0=E^0=[0, 1)^n$, and 
delete all but 
the $2^n$ corner cubes $\{Q_j^1\}$, 
of side $(1-\dl)/2$ to obtain $E^1$;
\item[$\bullet$]
Continue in this way, at the $k$ stage 
replacing each cube of $E^{k-1}$ by the $2^n$ corner cubes $\{Q_j^k\}$, 
of side $((1-\dl)/2)^k$ to get $E^k$.
\end{itemize}

Thus, $E^k$ contains $2^{nk}$ cubes 
of side $((1-\dl)/2)^k$, and hence 
\begin{equation}\label{1.2}
H^d(E^k)
\approx
2^{nk} \cdot ((1-\dl)/2)^{dk}
=
(2^{n-d}(1-\dl)^d)^k
=1.
\end{equation}

Let $\cS_0=\{Q_j^k\}$ be the Cantor family. 
Then, 
$$
\cA_{\cS_0}[\1_{Q_0}](x)
=
\sum_{j, k}\1_{Q_j^k}(x),
\quad x\in\R^n,
$$
is a~sparse operator of the function 
$\1_{Q_0}$ with $\eta=1-(1-\dl)^n$.
Since, 
$$
H^d(E^k) \approx 1,
\quad k=0,1,\ldots,
$$
we can not expect any 
$\cL^p(H^d)$, $p>0$, 
boundedness of the $\cA_{\cS_0}$.

In this paper 
we fix the target space of 
the sparse operator $\cA_{\cS}f$ 
defined on the Choquet space $\cL^p(H^d)$, $p\ge 1$.

\section{The first results}\label{sec2}
Fix $\cS\subset\cD$ be a~dyadic sparse family  
and we want to evaluate 
$$
\cA_{\cS}f(x)
=
\sum_{Q\in\cS}\fint_{Q}f\cdot\1_{Q}(x),
\quad f\ge 0,\,x\in\R^n.
$$

\subsection*{Duality argument}
We use a duality argument.
For suitable nonnegative functions 
$f$ and $g$,
\begin{align*}
\int_{\R^n}
g(x)\cA_{\cS}f(x)
\,{\rm d}x
&=
\sum_{Q\in\cS}
|Q|
\fint_{Q}f(x)\,{\rm d}x
\fint_{Q}g(x)\,{\rm d}x
\\ &\lesssim
\sum_{Q\in\cS}
|E_{Q}|
\fint_{Q}f(x)\,{\rm d}x
\fint_{Q}g(x)\,{\rm d}x
\\ &\le
\sum_{Q\in\cS}
\int_{E_{Q}}
Mf(x) \cdot Mg(x)
\,{\rm d}x
\\ &\le
\int_{\R^n}
Mf(x) \cdot Mg(x)
\,{\rm d}x.
\end{align*}
Here, 
$M$ is the usual dyadic Hardy-Littlewood maximal operator.
Using Adams' dual inequality \eqref{1.1} 
and H\"{o}lder's inequality,
\begin{align*}
\int_{\R^n}
g(x)\cA_{\cS}f(x)
\,{\rm d}x
&\lesssim
\int_{\R^n}
Mf(x) \cdot Mg(x)
\,{\rm d}x
\\ &\le
\int_{\R^n}
Mf(x)\cdot M_dMg(x)\,{\rm d}H^d
\\ &\le
\|Mf\|_{\cL^p(H^d)}
\|M_dMg\|_{\cL^{p'}(H^d)}
\\ &\lesssim
\|f\|_{\cL^p(H^d)}
\|M_dMg\|_{\cL^{p'}(H^d)},
\end{align*}
where we have used the boundedness of the Hardy-Littlewood maximal operator $M$ on $\cL^p(H^d)$, $p>d/n$, (see \cite{OV}).

\subsection*{Orlicz-Morrey spaces}
A~function 
$\Phi:\,[0, \8)\to[0, \8]$ 
is said to be a~Young function if 
it is left-continuous, convex and 
increasing, and if 
$\Phi(0)=0$ and 
$\Phi(t)\to\8$ as $t\to\8$. 

A~Young function $\Phi$ is said to satisfy the $\Dl_2$-condition, 
denoted $\Phi\in\Dl_2$, 
if for some $K>1$
$$
\Phi(2t)\le K\Phi(t)
\quad\text{for all }
t>0.
$$
Meanwhile, 
a~Young function $\Phi$ is said to satisfy the $\nabla_2$-condition, 
denoted $\Phi\in\nabla_2$, 
if for some $K>1$
$$
\Phi(t)\le\frac{1}{2K}\Phi(Kt)
\quad\text{ for all }
t>0.
$$
The function $\Phi(t)\equiv t$ 
satisfies the $\Dl_2$-condition 
but fails the $\nabla_2$-condition. 
If $1<p<\8$, then 
$\Phi(t)\equiv t^p$
satisfies both conditions. 

The complementary function $\ol{\Phi}$ of a Young function $\Phi$ 
is defined by
$$
\ol{\Phi}(t)
=
\sup\{ts-\Phi(s):\,s\in[0, \8)\}.
$$
Then $\ol{\Phi}$ is also a~Young function and 
$\ol{\ol{\Phi}}=\Phi$. 

Let $\Phi$ be a~Young function. 
For a~cube $Q$ define 
the $\Phi$-average over $Q$ of the measurable function $f$ by 
$$
\|f\|_{\Phi;Q}
=
\inf\lt\{\lm>0:\,
\fint_{Q}\Phi\lt(\frac{|f(x)|}{\lm}\rt)\,{\rm d}x\le 1
\rt\}.
$$
We define the dyadic fractional Orlicz maximal operator 
$M_{\al,\Phi}$ by
$$
M_{\al,\Phi}f(x)
=
\sup_{Q\in\cD}
\ell(Q)^{\al}
\|f\|_{\Phi;Q}\1_{Q}(x),
\quad 0<\al<n,\,x\in\R^n.
$$
It is well-known that (cf. \cite{SST}) 
$$
M_dMf(x) \approx M_{n-d,\Phi_0}f(x),
\qquad
\Phi_0(t)=t\log(e+t).
$$

\begin{quotation}
For $1<p\le\8$ and 
a~Young function $\Phi$, 
we define the Orlicz-Morrey space
$\cM^p_{\Phi}(H^d)$ 
by the set of all measurable functions $f$ satisfying 
$$
\|f\|_{\cM^p_{\Phi}(H^d)}
=
\|M_{n-d,\Phi}f\|_{\cL^p(H^d)}<\8.
$$
\end{quotation}

Since we notice that
\[
\|f\|_{\cM^{\8}_{\Phi}(H^d)}
=
\sup_{Q\in\cD}
\ell(Q)^{n-d}
\|f\|_{\Phi;Q},
\]
the space
$\cM^{\8}_{\Phi}(H^d)$ 
is a~usual Orlicz-Morrey space, 
which is introduced and studied in \cite{SST}.

\vspace{2mm}

We have had 
\begin{align*}
\int_{\R^n}
g(x)\cA_{\cS}f(x)
\,{\rm d}x
&\lesssim
\|f\|_{\cL^p(H^d)}
\|M_dMg\|_{\cL^{p'}(H^d)}
\\ &\approx
\|f\|_{\cL^p(H^d)}
\|M_{n-d,\Phi_0}g\|_{\cL^{p'}(H^d)}
\\ &=
\|f\|_{\cL^p(H^d)}
\|g\|_{\cM^{p'}_{\Phi_0}(H^d)}.
\end{align*}
In the left hand side, 
taking the supremum over 
$\|g\|_{\cM^{p'}_{\Phi_0}(H^d)}=1$,
we obtain the following theorem.

\begin{thm}\label{thm2.1}
For $1\le p<\8$, 
we have that
$$
\|\cA_{\cS}f\|_{{\cM^{p'}_{\Phi_0}(H^d)}'}
\lesssim
\|f\|_{\cL^p(H^d)}.
$$
\end{thm}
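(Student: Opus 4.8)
The plan is to read the left-hand side through the definition of the associate (K\"othe dual) space and then to feed each resulting pairing into the chain of estimates already assembled in the Duality argument and Orlicz-Morrey spaces subsections. By definition of the associate norm,
$$
\|\cA_{\cS}f\|_{{\cM^{p'}_{\Phi_0}(H^d)}'}
=
\sup\lt\{
\int_{\R^n}|\cA_{\cS}f|\,|g|\,{\rm d}x:\,
\|g\|_{\cM^{p'}_{\Phi_0}(H^d)}\le 1
\rt\},
$$
so it suffices to bound every such pairing by $C\|f\|_{\cL^p(H^d)}$ with $C$ independent of $g$. Before doing so I would reduce to nonnegative functions: the pointwise bound $|\cA_{\cS}f|\le\cA_{\cS}|f|$ together with the fact that $\cM^{p'}_{\Phi_0}(H^d)$ and its associate are lattice norms lets me replace $f$ by $|f|$, while $M_{n-d,\Phi_0}g$ depends only on $|g|$, so I may also replace $g$ by $|g|$; since $\||f|\|_{\cL^p(H^d)}=\|f\|_{\cL^p(H^d)}$, nothing is lost.

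For nonnegative $f$ and $g$ I would then reproduce the displayed computation. First expand $\int_{\R^n}g\,\cA_{\cS}f\,{\rm d}x=\sum_{Q\in\cS}|Q|\,(\fint_Qf)(\fint_Qg)$; using $|Q|\le\eta^{-1}|E_Q|$ from the sparseness of $\cS$ and the pointwise estimates $\fint_Qf\le Mf(x)$, $\fint_Qg\le Mg(x)$ valid on $E_Q\subset Q$, and then summing over the pairwise disjoint family $\{E_Q\}_{Q\in\cS}$, I obtain $\int_{\R^n}g\,\cA_{\cS}f\,{\rm d}x\lesssim\int_{\R^n}Mf\cdot Mg\,{\rm d}x$. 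Applying Adams' dual inequality \eqref{1.1} to the Radon measure ${\rm d}\mu=Mg\,{\rm d}x$ gives $\int_{\R^n}Mf\cdot Mg\,{\rm d}x\le\int_{\R^n}Mf\cdot M_dMg\,{\rm d}H^d$; H\"older's inequality on $\cL^p(H^d)$ and the boundedness of $M$ on $\cL^p(H^d)$ for $p>d/n$ (legitimate since $p\ge 1>d/n$) then yield $\lesssim\|f\|_{\cL^p(H^d)}\|M_dMg\|_{\cL^{p'}(H^d)}$. Finally the pointwise equivalence $M_dMg\approx M_{n-d,\Phi_0}g$ and the definition of the Orlicz-Morrey norm identify the last factor with $\|g\|_{\cM^{p'}_{\Phi_0}(H^d)}\le 1$. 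Taking the supremum over admissible $g$ closes the argument.

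Because essentially all of the analytic work has already been done above, I expect the genuine care to lie in the bookkeeping rather than in any single new estimate. The two points I would watch are, first, the correct identification of the associate-space norm with the pairing supremum over a suitable dense class of test functions (for instance $g\in C_0(\R^n)$, for which $Mg$ is a bona fide density so that $\mu=Mg\,{\rm d}x$ is an admissible measure in \eqref{1.1}); and second, the endpoint $p=1$, where $p'=\8$ and one must use the $\cL^1$--$\cL^{\8}$ form of H\"older's inequality together with the explicit description of $\cM^{\8}_{\Phi_0}(H^d)$ recorded above. The reduction to nonnegative functions and the passage from the bilinear bound to the associate norm are the only steps requiring argument rather than citation, and neither poses a serious obstacle.
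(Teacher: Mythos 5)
Your proposal is correct and follows essentially the same route as the paper: the sparse-family expansion with $|Q|\lesssim|E_{Q}|$, the pointwise bounds by $Mf\cdot Mg$ on the disjoint sets $E_{Q}$, Adams' dual inequality \eqref{1.1} applied to ${\rm d}\mu=Mg\,{\rm d}x$, H\"older's inequality, the $\cL^p(H^d)$-boundedness of $M$, and the identification $M_dMg\approx M_{n-d,\Phi_0}g$, followed by taking the supremum over $\|g\|_{\cM^{p'}_{\Phi_0}(H^d)}\le 1$. Your added care about reducing to nonnegative $f,g$ and identifying the associate norm with the pairing supremum is sound bookkeeping that the paper leaves implicit ("for suitable nonnegative functions").
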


The function space 
${\cM^{p'}_{\Phi_0}(H^d)}'$ 
is a~associate space to 
the Orlicz-Morrey space 
$\cM^{p'}_{\Phi_0}(H^d)$ 
(for details, see the book \cite{BS}).

\begin{rem}\label{rem2.2}
By the definition, 
the Orlicz-Morrey space 
$\cM^{p'}_{\Phi_0}(H^d)$ 
is embedded into the Morrey space 
$\cM^{p'}(H^d)$, which is 
an associate dual space of 
the Choquet space $\cL^p(H^d)$. 
We remark that this embedding is proper.
Indeed, 
recall that the function 
$F(x):=\cA_{\cS_0}[\1_{Q_0}](x)$
(see Section \ref{sec1}).
Then, 
by the observation in Section \ref{sec1} 
and 
Theorem \ref{thm2.1}, 
one knows that 
$F\notin\cL^p(H^d)$
and
$F\in{\cM^{p'}_{\Phi_0}(H^d)}'$,
respectively. This means that 
the embedding 
$\cL^p(H^d)$ 
into 
${\cM^{p'}_{\Phi_0}(H^d)}'$ 
is proper, and hence
the embedding 
$\cM^{p'}_{\Phi_0}(H^d)$ 
into 
$\cM^{p'}(H^d)$ 
is also proper.
\end{rem}

\section{The second results}\label{sec3}
We say that $\cT\subset\cD$ is a~tiling of $\R^n$ 
if it satisfies 
$$
\1_{\R^n}(x)
=
\sum_{Q\in\cT}\1_{Q}(x),
\quad x\in\R^n.
$$
We fix the parameters 
$p\in[1, \8)$, $p'$, 
the Young function $\Phi\in\Dl_2$ and 
its complementary $\ol{\Phi}$.
Thanks to $\Phi\in\Dl_2$,
for any $Q\in\cD$ such that 
$\|f\|_{\Phi;Q}>0$,
we have that, see \cite[pages~77--78]{RaRe},
\begin{equation}\label{3.1}
\fint_{Q}\Phi\lt(\frac{|f(x)|}{\|f\|_{\Phi;Q}}\rt)\,{\rm d}x=1.
\end{equation}
We also fix the tile $\cT$. 
Define 
the Orlicz block space 
$\cB^p_{\Phi}(H^d,\cT)$ 
and 
the Orlicz-Morrey space 
$\cM^{p'}_{\ol{\Phi}}(H^d,\cT)$ 
by the set of all measurable functions
satisfying
\begin{align*}
\|f\|_{\cB^p_{\Phi}(H^d,\cT)}
&=
\lt\|
\sum_{Q\in\cT}
\|f\|_{\Phi;Q}^p
\1_{Q}
\rt\|_{\cL^1(H^d)}^{\frac1p}<\8,
\\
\|g\|_{\cM^{p'}_{\ol{\Phi}}(H^d,\cT)}
&=
\lt\|
\sum_{Q\in\cT}
\lt(
\ell(Q)^{n-d}\|g\|_{\ol{\Phi};Q}
\rt)^{p'}
\1_{Q}
\rt\|_{\cL^1(H^d)}^{\frac1{p'}}<\8,
\end{align*}
respectively.

\begin{thm}\label{thm3.1}
We have that
$$
\|f\|_{{\cM^{p'}_{\ol{\Phi}}(H^d)}'}
\lesssim
\|f\|_{\cB^p_{\Phi}(H^d,\cT)}
\lesssim
\|f\|_{{\cM^{p'}_{\ol{\Phi}}(H^d,\cT)}'}.
$$
\end{thm}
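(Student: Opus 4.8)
The plan is to prove the two estimates separately, using throughout the Köthe/associate pairing $\int_{\R^n}fg\,\mathrm dx$ (the same pairing implicitly used to deduce Theorem~\ref{thm2.1} from the displayed duality chain), so that
$$
\|f\|_{{\cM^{p'}_{\ol\Phi}(H^d)}'}=\sup\Big\{\int_{\R^n}|f|g\,\mathrm dx:\ \|g\|_{\cM^{p'}_{\ol\Phi}(H^d)}\le1\Big\},
$$
and likewise for $\cM^{p'}_{\ol\Phi}(H^d,\cT)$. Because $\cT$ tiles $\R^n$, the sums defining the block and tile–Morrey norms are constant on each tile, so one has the identities $\|f\|_{\cB^p_\Phi(H^d,\cT)}=\|A\|_{\cL^p(H^d)}$ and $\|g\|_{\cM^{p'}_{\ol\Phi}(H^d,\cT)}=\|\sum_{Q\in\cT}\ell(Q)^{n-d}\|g\|_{\ol\Phi;Q}\1_Q\|_{\cL^{p'}(H^d)}$, where I write $A:=\sum_{Q\in\cT}\|f\|_{\Phi;Q}\1_Q$. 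These reduce everything to estimating the pairing against the two $\cL$-norms of tile–constant functions.

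For the \emph{left} inequality I would run the argument of Theorem~\ref{thm2.1}. Fix $g$ with $\|g\|_{\cM^{p'}_{\ol\Phi}(H^d)}\le1$. The generalized Hölder inequality on each tile gives $\int_Q|f|g\,\mathrm dx\le2|Q|\,\|f\|_{\Phi;Q}\|g\|_{\ol\Phi;Q}$, so with $h:=\sum_{Q\in\cT}\|g\|_{\ol\Phi;Q}\1_Q$ one gets $\int_{\R^n}|f|g\,\mathrm dx\le2\int_{\R^n}A\,h\,\mathrm dx$. I would then feed $\mathrm d\mu=h\,\mathrm dx$ into Adams' dual inequality~\eqref{1.1} with test function $A$, obtaining $\int Ah\,\mathrm dx\le\int A\,M_d(h\,\mathrm dx)\,\mathrm dH^d$. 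The role of $h$ is that its unweighted dyadic maximal function is controlled by the Orlicz–Morrey maximal function: for tiles $Q\subset Q'$, convexity of $\ol\Phi$ and $\fint_{Q'}\ol\Phi(|g|/\|g\|_{\ol\Phi;Q'})\le1$ give $\|g\|_{\ol\Phi;Q}\le\|g\|_{\ol\Phi;Q'}\big(1+\fint_Q\ol\Phi(|g|/\|g\|_{\ol\Phi;Q'})\big)$, hence $\fint_{Q'}h\le2\|g\|_{\ol\Phi;Q'}$; with the trivial case of cubes inside a single tile this yields $M_d(h\,\mathrm dx)\le2\,M_{n-d,\ol\Phi}g$ pointwise. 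Hölder's inequality for the Choquet integral (exactly as applied to $M_dMg$ in Theorem~\ref{thm2.1}) then gives $\int A\,M_{n-d,\ol\Phi}g\,\mathrm dH^d\le\|A\|_{\cL^p(H^d)}\|M_{n-d,\ol\Phi}g\|_{\cL^{p'}(H^d)}=\|f\|_{\cB^p_\Phi(H^d,\cT)}\|g\|_{\cM^{p'}_{\ol\Phi}(H^d)}$. Taking the supremum over $g$ finishes this half.

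For the \emph{right} inequality I would instead construct a near-extremal $g$. On each tile $Q$ with $\|f\|_{\Phi;Q}>0$, the sharp lower half of the Orlicz Hölder inequality produces a norming $h_Q\ge0$, supported in $Q$, with $\|h_Q\|_{\ol\Phi;Q}\approx1$ and $\fint_Q|f|h_Q\,\mathrm dx\gtrsim\|f\|_{\Phi;Q}$; here I would build $h_Q$ from $\Phi'(|f|/\|f\|_{\Phi;Q})$ via Young's equality, using $\Phi\in\Dl_2$ and the saturation identity~\eqref{3.1} to keep the constants uniform. Writing $a_Q=\|f\|_{\Phi;Q}$ and setting $g=\sum_{Q\in\cT}\lambda_Q h_Q$ with $\lambda_Q=\ell(Q)^{d-n}a_Q^{p-1}/\|A\|_{\cL^p(H^d)}^{p-1}$, disjointness of supports gives $\ell(Q)^{n-d}\|g\|_{\ol\Phi;Q}\approx a_Q^{p-1}/\|A\|^{p-1}$, so that $\|g\|_{\cM^{p'}_{\ol\Phi}(H^d,\cT)}\approx\|A^{p-1}\|_{\cL^{p'}(H^d)}/\|A\|^{p-1}=1$, while
$$
\int_{\R^n}|f|g\,\mathrm dx\gtrsim\sum_{Q\in\cT}\lambda_Q|Q|a_Q=\frac1{\|A\|^{p-1}}\sum_{Q\in\cT}\ell(Q)^d a_Q^{p}\ge\frac1{\|A\|^{p-1}}\int_{\R^n}A^p\,\mathrm dH^d=\|A\|_{\cL^p(H^d)},
$$
the last step being the easy direction $\sum_Q\ell(Q)^d a_Q^p\ge\int A^p\,\mathrm dH^d$ coming from countable subadditivity of $H^d$.

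I expect the genuine obstacle to be this right inequality: one must exhibit a \emph{single} $g$ that is simultaneously near-extremal on every tile yet has $\cM^{p'}_{\ol\Phi}(H^d,\cT)$-norm of size $O(1)$, which is precisely where the sharp lower Orlicz–Hölder bound with uniform constants, hence $\Phi\in\Dl_2$ and~\eqref{3.1}, is indispensable (the gluing itself is harmless only because the tiles are disjoint). The left inequality is, by contrast, a variant of Theorem~\ref{thm2.1}, its one new ingredient being the convexity estimate $\fint_{Q'}h\le2\|g\|_{\ol\Phi;Q'}$ that replaces the crude tile-average maximal function by $M_{n-d,\ol\Phi}g$ and thereby defeats the non-additivity of $H^d$.
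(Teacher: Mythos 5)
Your proposal is correct, and the two halves compare with the paper's proof differently. For the \emph{left} inequality you follow the same skeleton as the paper: tile-wise generalized H\"older, Adams' inequality \eqref{1.1} applied to the measure $h\,{\rm d}x$ with $h=\sum_{Q\in\cT}\|g\|_{\ol{\Phi};Q}\1_{Q}$, then H\"older for the Choquet integral, together with the disjointness identity $\bigl\|\sum_{Q\in\cT}\|f\|_{\Phi;Q}\1_{Q}\bigr\|_{\cL^p(H^d)}=\|f\|_{\cB^p_{\Phi}(H^d,\cT)}$. The only divergence is how the crucial pointwise bound $M_d(h\,{\rm d}x)\le 2M_{n-d,\ol{\Phi}}g$ is verified: the paper uses the Amemiya-norm characterization $\|g\|_{\ol{\Phi};Q}\le\inf_{s>0}\bigl\{s+\frac{s}{|Q|}\int_{Q}\ol{\Phi}(g/s)\bigr\}\le 2\|g\|_{\ol{\Phi};Q}$ from \cite{RaRe} plus superadditivity of the infimum over the tiles inside a dyadic cube, whereas you use the direct convexity estimate $\|g\|_{\ol{\Phi};Q}\le\|g\|_{\ol{\Phi};Q'}\bigl(1+\fint_{Q}\ol{\Phi}(|g|/\|g\|_{\ol{\Phi};Q'})\bigr)$ (valid since dividing by $c\ge1$ costs a factor $1/c$ inside a convex $\ol{\Phi}$ with $\ol{\Phi}(0)=0$); both give the same constant $2$, and yours is self-contained. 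For the \emph{right} inequality your route is genuinely different and simpler. The paper first invokes Adams' dual theorem to extract an extremal measure $\mu$ with $\mu(Q)\le\ell(Q)^d$ for all $Q\in\cD$ and $\|f\|_{\cB^p_{\Phi}(H^d,\cT)}^p=\sum_{Q\in\cT}\|f\|_{\Phi;Q}^p\mu(Q)$, and then builds the norming function with the weights $\mu(Q)/|Q|$; you instead take the weights $\ell(Q)^{d-n}$ outright and close the loop with the elementary bound $\int A^p\,{\rm d}H^d\le\sum_{Q\in\cT}\ell(Q)^d\|f\|_{\Phi;Q}^p$, which indeed follows from the layer-cake formula and countable subadditivity of $H^d$ applied to the covering of $\{A^p>t\}$ by the tiles where $\|f\|_{\Phi;Q}^p>t$. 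Thus you use only the trivial direction of the $\cL^1(H^d)$--$\cM^{\8}(H^d)$ duality, avoiding the existence of an optimal measure from \cite{ST} altogether; the Orlicz ingredients are identical in both proofs (Young's equality, the saturation identity \eqref{3.1}, the normalization $\|f_{Q}\|_{\ol{\Phi};Q}\le 1+\fint_{Q}\ol{\Phi}(\Phi'(|f|/\|f\|_{\Phi;Q}))$ from Lemma \ref{lem3.4}, and $\Phi\in\Dl_2$ to keep that normalizer finite), and your $h_{Q}$ in fact achieves $\fint_{Q}|f|h_{Q}=\|f\|_{\Phi;Q}$ exactly with $\|h_{Q}\|_{\ol{\Phi};Q}\le1$, so no constant is lost. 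Two shared caveats, on equal footing with the paper: the normalization by $\|A\|_{\cL^p(H^d)}^{p-1}$ presupposes $0<\|f\|_{\cB^p_{\Phi}(H^d,\cT)}<\8$ (the infinite case needs a truncation/exhaustion remark), and for $p=1$, $p'=\8$ the tile-Morrey norm must be read as the $\cL^{\8}(H^d)$-norm of $\sum_{Q\in\cT}\ell(Q)^{n-d}\|g\|_{\ol{\Phi};Q}\1_{Q}$, under which both arguments go through verbatim.
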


\begin{proof}[Proof of the first inequality]
For suitable nonnegative functions 
$f$ and $g$,
\begin{align*}
\int_{\R^n}f(x)g(x)\,{\rm d}x
&=
\sum_{Q\in\cT}
\int_{Q}f(x)g(x)\,{\rm d}x
\\ &=
\sum_{Q\in\cT}
|Q|
\fint_{Q}f(x)g(x)\,{\rm d}x
\\ &\le 2
\sum_{Q\in\cT}
|Q|
\|f\|_{\Phi;Q}
\|g\|_{\ol{\Phi};Q}
\\ &= 2
\int_{\R^n}
\lt(
\sum_{Q\in\cT}
\|f\|_{\Phi;Q}
\|g\|_{\ol{\Phi};Q}
\1_{Q}(x)
\rt)\,{\rm d}x
\\ &\le 4
\int_{\R^n}
\lt(
\sum_{Q\in\cT}
\|f\|_{\Phi;Q}\1_{Q}
\rt)
M_{n-d,\ol{\Phi}}g
\,{\rm d}H^d,
\\ \qquad
\text{we verify this inequality later},
\\ &\le 4
\lt\|
\sum_{Q\in\cT}
\|f\|_{\Phi;Q}\1_{Q}
\rt\|_{\cL^p(H^d)}
\cdot
\|M_{n-d,\ol{\Phi}}g\|_{\cL^{p'}(H^d)}
\\ &=4
\lt\|
\sum_{Q\in\cT}
\|f\|_{\Phi;Q}^p\1_{Q}
\rt\|_{\cL^1(H^d)}^{\frac1p}
\cdot
\|M_{n-d,\ol{\Phi}}g\|_{\cL^{p'}(H^d)}
\\ &=4
\|f\|_{\cB^p_{\Phi}(H^d,\cT)}
\cdot
\|M_{n-d,\ol{\Phi}}g\|_{\cL^{p'}(H^d)}.
\end{align*}
In the left hand side, 
taking the supremum over 
$\|g\|_{\cM^{p'}_{\ol{\Phi}}(H^d)}=1$,
we obtain the first inequality of 
Theorem \ref{thm3.1}.
\end{proof}

\noindent\textbf{Verification.}\quad
This inequality is a~consequence of Adams' dual inequality \eqref{1.1}.
We need the following observation.
For $Q_0\in\cD$,
$$
\int_{Q_0}
\sum_{\substack{
Q\subset Q_0: \\ Q\in\cT
}}
\|g\|_{\ol{\Phi};Q}\1_{Q}(y)
\,{\rm d}y
=
\sum_{\substack{
Q\subset Q_0: \\ Q\in\cT
}}
|Q|
\|g\|_{\ol{\Phi};Q}
=:{\rm(i)}.
$$
To proceed, we use another characterization of the Luxemburg norm 
(cf. \cite[p69]{RaRe}).
$$
\|g\|_{\ol{\Phi};Q}
\le
\inf_{s>0}
\lt\{
s+\frac{s}{|Q|}\int_{Q}
\ol{\Phi}\lt(\frac{g(x)}{s}\rt)\,{\rm d}x
\rt\}
\le 2
\|g\|_{\ol{\Phi};Q}.
$$
This entails
\begin{align*}
2|Q_0|
\|g\|_{\ol{\Phi};Q_0}
&\ge
\inf_{s>0}
\lt\{
s|Q_0|+s\int_{Q_0}
\ol{\Phi}\lt(\frac{g(x)}{s}\rt)\,{\rm d}x
\rt\}
\\ &=
\inf_{s>0}
\sum_{\substack{
Q\subset Q_0: \\ Q\in\cT
}}
\lt\{
s|Q|+s\int_{Q}
\ol{\Phi}\lt(\frac{g(x)}{s}\rt)\,{\rm d}x
\rt\}
\\ &\ge
\sum_{\substack{
Q\subset Q_0: \\ Q\in\cT
}}
\inf_{s>0}
\lt\{
s|Q|+s\int_{Q}
\ol{\Phi}\lt(\frac{g(x)}{s}\rt)\,{\rm d}x
\rt\}
\\ &\ge
\sum_{\substack{
Q\subset Q_0: \\ Q\in\cT
}}
|Q|
\|g\|_{\ol{\Phi};Q}
\\ &={\rm(i)}.
\end{align*}
Thus, we obtain
$$
\ell(Q_0)^{-d}
\int_{Q_0}
\sum_{\substack{
Q\subset Q_0: \\ Q\in\cT
}}
\|g\|_{\ol{\Phi};Q}\1_{Q}(y)
\,{\rm d}y
\le 2
\ell(Q_0)^{n-d}
\|g\|_{\ol{\Phi};Q_0}.
\quad\qed
$$

\begin{proof}[Proof of the second inequality]
We assume that
$$
0<\|f\|_{\cB^p_{\Phi}(H^d,\cT)}<\8.
$$
By Adams' dual theorem, 
there exists a~positive measure $\mu$ such that
$$
\|f\|_{\cB^p_{\Phi}(H^d,\cT)}^p
=
\sum_{Q\in\cT}
\|f\|_{\Phi;Q}^p\mu(Q),
$$
where the measure $\mu$ fulfills 
$$
\mu(Q)\le\ell(Q)^d
\quad\text{for all}\quad Q\in\cD.
$$
We invoke a fundamental dual equation:
$$
t\Phi'(t)
=
\Phi(t)+\ol{\Phi}(\Phi'(t)).
$$
This can be verified by the fact that
the equation 
$st=\Phi(t)+\ol{\Phi}(s)$
holds for $s=\Phi'(t)$
, see \cite[Theorem 8.12]{BS}.
It follows that, 
for any $Q\in\cD$ such that 
$\|f\|_{\Phi;Q}>0$,
\begin{align*}
\fint_{Q}
\frac{|f(x)|}{\|f\|_{\Phi;Q}}
\Phi'\lt(
\frac{|f(x)|}{\|f\|_{\Phi;Q}}
\rt)\,{\rm d}x
&=
\fint_{Q}\Phi\lt(
\frac{|f(x)|}{\|f\|_{\Phi;Q}}
\rt)\,{\rm d}x
+
\fint_{Q}\ol{\Phi}\lt(\Phi'\lt(
\frac{|f(x)|}{\|f\|_{\Phi;Q}}
\rt)\rt)\,{\rm d}x
\\ &=
1
+
\fint_{Q}\ol{\Phi}\lt(\Phi'\lt(
\frac{|f(x)|}{\|f\|_{\Phi;Q}}
\rt)\rt)\,{\rm d}x,
\end{align*}
where we have used \eqref{3.1}.
First we treat the quantity 
$\|f\|_{\Phi;Q}^p\mu(Q)$
times the left hand side:
$$
\|f\|_{\Phi;Q}^p\mu(Q)
\cdot
\fint_{Q}
\frac{|f(x)|}{\|f\|_{\Phi;Q}}
\Phi'\lt(
\frac{|f(x)|}{\|f\|_{\Phi;Q}}
\rt)\,{\rm d}x
=
\|f\|_{\Phi;Q}^{p-1}
\frac{\mu(Q)}{|Q|}
\int_{Q}|f(x)|f_{Q}(x)\,{\rm d}x,
$$
where, 
$$
f_{Q}(x)
:=
\Phi'\lt(
\frac{|f(x)|}{\|f\|_{\Phi;Q}}
\rt)\1_{Q}(x).
$$
Thus, we obtain
$$
\|f\|_{\Phi;Q}^p\mu(Q)
=
\int_{Q}|f(x)|F_{Q}(x)\,{\rm d}x,
$$
where, 
$$
F_{Q}(x)
:=
\lt[
1
+
\fint_{Q}\ol{\Phi}\lt(\Phi'\lt(
\frac{|f(x)|}{\|f\|_{\Phi;Q}}
\rt)\rt)\,{\rm d}x
\rt]^{-1}
\|f\|_{\Phi;Q}^{p-1}
\frac{\mu(Q)}{|Q|}
f_{Q}(x).
$$
It follows that 
$$
\sum_{Q\in\cT}
\|f\|_{\Phi;Q}^p\mu(Q)
=
\int_{\R^n}|f(x)|F(x)\,{\rm d}x,
$$
where, 
$$
F(x)
:=
\sum_{Q\in\cT}F_{Q}(x),
\quad x\in\R^n.
$$
We see that
\begin{align*}
\ell(Q)^{n-d}\|F_{Q}\|_{\ol{\Phi};Q}
&=
\ell(Q)^{n-d}
\frac{\mu(Q)}{|Q|}
\lt[
1
+
\fint_{Q}\ol{\Phi}\lt(\Phi'\lt(
\frac{|f(x)|}{\|f\|_{\Phi;Q}}
\rt)\rt)\,{\rm d}x
\rt]^{-1}
\|f\|_{\Phi;Q}^{p-1}
\|f_{Q}\|_{\ol{\Phi};Q}
\\ &\le 
\|f\|_{\Phi;Q}^{p-1},
\end{align*}
where we have used
$$
\ell(Q)^{n-d}
\frac{\mu(Q)}{|Q|}
\le 1
$$
and, see Lemma \ref{lem3.4}, below,
$$
\|f_{Q}\|_{\ol{\Phi};Q}
\le
1
+
\fint_{Q}\ol{\Phi}\lt(\Phi'\lt(
\frac{|f(x)|}{\|f\|_{\Phi;Q}}
\rt)\rt)\,{\rm d}x.
$$
Thus, we obtain
\begin{align*}
\|F\|_{\cM^{p'}_{\ol{\Phi}}(H^d,\cT)}
&=
\lt\|
\sum_{Q\in\cT}
\lt(
\ell(Q)^{n-d}\|F_{Q}\|_{\ol{\Phi};Q}
\rt)^{p'}
\1_{Q}
\rt\|_{\cL^1(H^d)}^{\frac1{p'}}
\\ &\le
\lt\|
\sum_{Q\in\cT}
\|f\|_{\Phi;Q}^p\1_{Q}
\rt\|_{\cL^1(H^d)}^{\frac1{p'}}
\\ &=
\|f\|_{\cB^p_{\Phi}(H^d,\cT)}^{\frac{p}{p'}},
\end{align*}
which implies
$$
\lt\|
F\|f\|_{\cB^p_{\Phi}(H^d,\cT)}^{-\frac{p}{p'}}
\rt\|_{\cM^{p'}_{\ol{\Phi}}(H^d,\cT)}
\le 1.
$$
This entails
\begin{align*}
\|f\|_{\cB^p_{\Phi}(H^d,\cT)}
&=
\|f\|_{\cB^p_{\Phi}(H^d,\cT)}^{-\frac{p}{p'}}
\cdot
\|f\|_{\cB^p_{\Phi}(H^d,\cT)}^p
\\ &\le
\int_{\R^n}
|f|
\cdot
F\|f\|_{\cB^p_{\Phi}(H^d,\cT)}^{-\frac{p}{p'}}
\,{\rm d}x
\\ &\le
\|f\|_{{\cM^{p'}_{\ol{\Phi}}(H^d,\cT)}'}.
\end{align*}
This is the second inequality of Theorem \ref{thm3.1}.
\end{proof}

Clearly, one has that
$$
\sup_{\cT}
\|f\|_{\cM^{\8}_{\ol{\Phi}}(H^d,\cT)}
\approx
\|f\|_{\cM^{\8}_{\ol{\Phi}}(H^d)},
$$
which yields the following corollary.

\begin{cor}\label{cor3.2}
We have that
$$
\inf_{\cT}
\|f\|_{\cB^1_{\Phi}(H^d,\cT)}
=
\inf_{\cT}
\lt\|
\sum_{Q\in\cT}
\|f\|_{\Phi;Q}\1_{Q}
\rt\|_{\cL^1(H^d)}
\approx
\|f\|_{{\cM^{\8}_{\ol{\Phi}}(H^d)}'}.
$$
\end{cor}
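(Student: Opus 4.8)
The plan is to establish the asserted equivalence by two inequalities; the first displayed equality is nothing but the definition of $\cB^1_\Phi(H^d,\cT)$ specialized to $p=1$, where $\|f\|_{\Phi;Q}^p=\|f\|_{\Phi;Q}$ and the outer exponent $1/p$ equals $1$. Throughout I abbreviate $X=\cM^\infty_{\ol{\Phi}}(H^d)$ and $X_\cT=\cM^\infty_{\ol{\Phi}}(H^d,\cT)$, so that the relation recorded just before the corollary reads $\sup_\cT\|g\|_{X_\cT}=\|g\|_X$; in particular $\|g\|_{X_\cT}\le\|g\|_X$ for every tiling $\cT$.

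The lower bound $\|f\|_{X'}\lesssim\inf_\cT\|f\|_{\cB^1_\Phi(H^d,\cT)}$ is immediate. Specializing Theorem~\ref{thm3.1} to $p=1$, $p'=\infty$, its first inequality gives $\|f\|_{X'}\lesssim\|f\|_{\cB^1_\Phi(H^d,\cT)}$ for \emph{every} tiling $\cT$; since the left side is independent of $\cT$, taking the infimum over $\cT$ yields the claim.

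For the upper bound I would use the second inequality of Theorem~\ref{thm3.1}, namely $\|f\|_{\cB^1_\Phi(H^d,\cT)}\lesssim\|f\|_{X_\cT'}$ for each $\cT$, so that $\inf_\cT\|f\|_{\cB^1_\Phi(H^d,\cT)}\lesssim\inf_\cT\|f\|_{X_\cT'}$, thereby reducing matters to $\inf_\cT\|f\|_{X_\cT'}\lesssim\|f\|_{X'}$. To make this concrete I would first compute the tiling associate norm: since the tiles are pairwise disjoint, $\int_{\R^n}fg=\sum_{Q\in\cT}\int_Q fg$, and a short computation based on the Orlicz--H\"older inequality on each tile, saturating the constraint $\ell(Q)^{n-d}\|g\|_{\ol{\Phi};Q}\le1$, gives $\|f\|_{X_\cT'}\approx\sum_{Q\in\cT}\ell(Q)^d\|f\|_{\Phi;Q}$. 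Thus the upper bound amounts to producing a \emph{single} tiling $\cT$ with $\sum_{Q\in\cT}\ell(Q)^d\|f\|_{\Phi;Q}\lesssim\|f\|_{X'}$. Since $0<d<n$, enlarging a tile typically lowers $\ell(Q)^d\|f\|_{\Phi;Q}$ where $f$ is locally sparse, so I would select $\cT$ by a stopping-time procedure, taking the largest dyadic cubes on which the local Orlicz content of $f$ has not yet exhausted the Hausdorff-content budget and refining only where it has.

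The main obstacle is precisely this construction and the minimax step behind it. The equality $\|g\|_X=\sup_\cT\|g\|_{X_\cT}$ exhibits $X$ as the intersection $\bigcap_\cT X_\cT$, and for associate norms this yields for free only the easy direction $\|f\|_{X'}\le\inf_\cT\|f\|_{X_\cT'}$ via the minimax inequality; the reverse $\inf_\cT\|f\|_{X_\cT'}\lesssim\|f\|_{X'}$ is a genuine minimax equality that does not follow formally and is exactly where the work lies. Concretely, the difficulty is to keep the full Orlicz--Morrey constraint on the extremal dual function under control on dyadic cubes $Q'$ that do \emph{not} belong to the chosen tiling---both on sub-cubes $Q'\subsetneq Q$ of a tile and on super-cubes $Q'$ swallowing several tiles---since only there can the passage from $\|g\|_{X_\cT}$ back to $\|g\|_X$ degrade. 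Controlling these terms, using the $\Dl_2$/$\nabla_2$ behavior of $\Phi$ and $\ol{\Phi}$ and the subadditivity of the Choquet integral, is the crux of the argument.
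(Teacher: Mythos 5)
Your definitional equality and the lower bound are correct and coincide exactly with the paper's route: the first inequality of Theorem \ref{thm3.1} with $p=1$, $p'=\infty$ holds for every tiling $\cT$, and since its left-hand side does not depend on $\cT$, taking the infimum gives $\|f\|_{{\cM^{\infty}_{\ol{\Phi}}(H^d)}'}\lesssim\inf_{\cT}\|f\|_{\cB^1_{\Phi}(H^d,\cT)}$. The upper bound, however, is not proved in your proposal, and this is a genuine gap. You reduce it, via the second inequality of Theorem \ref{thm3.1} and a (correct) computation $\|f\|_{{\cM^{\infty}_{\ol{\Phi}}(H^d,\cT)}'}\approx\sum_{Q\in\cT}\ell(Q)^d\|f\|_{\Phi;Q}$, to the minimax claim $\inf_{\cT}\|f\|_{{\cM^{\infty}_{\ol{\Phi}}(H^d,\cT)}'}\lesssim\|f\|_{{\cM^{\infty}_{\ol{\Phi}}(H^d)}'}$, and then you only describe a stopping-time selection of $\cT$ without constructing it or verifying any of its properties; your own text concedes that this is ``exactly where the work lies.'' Identifying the crux is not the same as resolving it: everything the corollary asserts beyond Theorem \ref{thm3.1} lives in precisely this step, so the statement is at best half proved. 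For comparison, the paper's own proof is the one-sentence deduction from Theorem \ref{thm3.1} together with the identity $\sup_{\cT}\|g\|_{\cM^{\infty}_{\ol{\Phi}}(H^d,\cT)}\approx\|g\|_{\cM^{\infty}_{\ol{\Phi}}(H^d)}$ (every dyadic cube belongs to some tiling), with the passage to associate norms treated as immediate; your observation that this identity formally yields only $\|f\|_{{\cM^{\infty}_{\ol{\Phi}}(H^d)}'}\le\inf_{\cT}\|f\|_{{\cM^{\infty}_{\ol{\Phi}}(H^d,\cT)}'}$ --- the direction already covered by the lower bound --- is a fair reading of where the paper is terse, but your proposal does not close that step either.

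A further caution: your reduction may replace the corollary by a strictly harder statement. For a fixed tiling, $\|f\|_{\cB^1_{\Phi}(H^d,\cT)}$ is a Choquet integral of a sum over disjoint tiles, and it can be far smaller than $\|f\|_{{\cM^{\infty}_{\ol{\Phi}}(H^d,\cT)}'}\approx\sum_{Q\in\cT}\ell(Q)^d\|f\|_{\Phi;Q}$: for $f=\1_{[0,1)^n}$ and the tiling whose tiles inside $[0,1)^n$ have side $2^{-m}$, the block norm stays $\approx H^d([0,1)^n)=1$ while the sum is $\approx 2^{m(n-d)}$, because the Hausdorff contents of the tiles do not add. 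This collapse of the Choquet integral over families of cubes whose contents overlap is the central mechanism of the whole paper (cf.\ \eqref{1.2} and the estimation of $\cA_{\cS_0}[\1_{Q_0}]$ at the end of Section \ref{sec3}), and your route discards it before the main estimate begins. Any successful construction of a good tiling should therefore be tested against the block norm $\|f\|_{\cB^1_{\Phi}(H^d,\cT)}$ itself, where a coarse or adapted tiling can exploit the collapse, rather than against the larger tiling-associate norm $\|f\|_{{\cM^{\infty}_{\ol{\Phi}}(H^d,\cT)}'}$, for which the analogous bound is not known to hold and may fail.
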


Theorem \ref{thm2.1} and Corollary \ref{cor3.2} give us the following theorem.

\begin{thm}\label{thm3.3}
We have that
\begin{equation}\label{3.2}
\inf_{\cT}
\|\cA_{\cS}f\|_{\cB^1_{\ol{\Phi}_0}(H^d,\cT)}
=
\inf_{\cT}
\lt\|
\sum_{Q\in\cT}
\|\cA_{\cS}f\|_{\ol{\Phi}_0;Q}
\1_{Q}
\rt\|_{\cL^1(H^d)}
\lesssim
\|f\|_{\cL^1(H^d)},
\qquad
\ol{\Phi}_0(t) \approx e^t-1,
\quad t>1.
\end{equation}
\end{thm}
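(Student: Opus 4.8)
The plan is to obtain Theorem~\ref{thm3.3} as the straightforward concatenation of Theorem~\ref{thm2.1} and Corollary~\ref{cor3.2}, exactly as the sentence preceding the statement announces. First I would specialize Theorem~\ref{thm2.1} to $p=1$, so that $p'=\8$; this gives
$$
\|\cA_{\cS}f\|_{{\cM^{\8}_{\Phi_0}(H^d)}'}
\lesssim
\|f\|_{\cL^1(H^d)},
$$
isolating the associate Orlicz--Morrey norm that must be re-expressed through block norms. Next I would apply Corollary~\ref{cor3.2} with the Young function $\ol{\Phi}_0$ in place of $\Phi$; since $\ol{\ol{\Phi}_0}=\Phi_0$, feeding in $g=\cA_{\cS}f$ produces
$$
\inf_{\cT}
\|\cA_{\cS}f\|_{\cB^1_{\ol{\Phi}_0}(H^d,\cT)}
=
\inf_{\cT}
\lt\|
\sum_{Q\in\cT}
\|\cA_{\cS}f\|_{\ol{\Phi}_0;Q}
\1_{Q}
\rt\|_{\cL^1(H^d)}
\approx
\|\cA_{\cS}f\|_{{\cM^{\8}_{\Phi_0}(H^d)}'},
$$
where the first equality is only the unfolding of the definition of $\|\cdot\|_{\cB^1_{\ol{\Phi}_0}(H^d,\cT)}$ at $p=1$. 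Chaining the two displays yields the asserted bound by $\|f\|_{\cL^1(H^d)}$.

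It remains to justify the asymptotic $\ol{\Phi}_0(t)\approx e^t-1$ for $t>1$, which records that the target is an exponential-type block space. I would read this off from the definition of the complementary function, $\ol{\Phi}_0(s)=\sup_{t\ge 0}\lt(st-t\log(e+t)\rt)$: the supremum is attained where $s=\Phi_0'(t)=\log(e+t)+t/(e+t)\approx\log t$, i.e.\ $t\approx e^{s}$, and substituting back gives $\ol{\Phi}_0(s)=t^2/(e+t)\approx e^{s}$ for large $s$, the subtraction of $1$ merely enforcing $\ol{\Phi}_0(0)=0$. This is the standard Young-conjugate computation for $t\log(e+t)$ and can be cited.

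The main obstacle is not the bookkeeping but a hypothesis mismatch. Corollary~\ref{cor3.2}, and specifically the second inequality of Theorem~\ref{thm3.1} on which its upper half rests, was proved under the standing assumption $\Phi\in\Dl_2$, entering through the normalization \eqref{3.1}; yet the function $\ol{\Phi}_0\approx e^t-1$ we now substitute for $\Phi$ fails $\Dl_2$. So the delicate point is to confirm that the one inequality actually used here,
$$
\inf_{\cT}
\|\cA_{\cS}f\|_{\cB^1_{\ol{\Phi}_0}(H^d,\cT)}
\lesssim
\|\cA_{\cS}f\|_{{\cM^{\8}_{\Phi_0}(H^d)}'},
$$
persists without $\Dl_2$. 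I would not expect the naive replacement of the equality in \eqref{3.1} by the always-valid inequality $\fint_{Q}\Phi(|f|/\|f\|_{\Phi;Q})\le 1$ to suffice, since it tilts the key identity $\|f\|_{\Phi;Q}^p\mu(Q)=\int_Q|f|F_Q$ in the wrong direction; instead I would revisit the construction of the dual function $F$ directly, or approximate $\ol{\Phi}_0$ from below by $\Dl_2$ Young functions and pass to the limit, exploiting that the complementary function $\Phi_0$ does satisfy $\Dl_2$ to keep the associate-side estimates under control with constants independent of the approximation.
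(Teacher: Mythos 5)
Your first two paragraphs are precisely the paper's own proof: Theorem \ref{thm3.3} is derived there in a single sentence, by specializing Theorem \ref{thm2.1} to $p=1$ (so $p'=\8$) and invoking Corollary \ref{cor3.2} with the block-space function $\ol{\Phi}_0$, whose complementary is $\ol{\ol{\Phi}_0}=\Phi_0$; your conjugate computation $\ol{\Phi}_0(t)\approx e^t-1$ is likewise the intended one. (The material following Theorem \ref{thm3.3} in the paper --- the estimate of $\cA_{\cS_0}[\1_{Q_0}]$ in the exponential Orlicz average over $Q_0$ --- is a consistency check on the Cantor example, not part of the proof.)

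The hypothesis mismatch you raise in your final paragraph is a genuine gap, but it is a gap in the paper's own chain of reasoning, not a defect of your reconstruction. Section \ref{sec3} fixes $\Phi\in\Dl_2$ as a standing assumption, and the half of Corollary \ref{cor3.2} that Theorem \ref{thm3.3} actually needs, namely
$$
\inf_{\cT}\|\cA_{\cS}f\|_{\cB^1_{\ol{\Phi}_0}(H^d,\cT)}
\lesssim
\|\cA_{\cS}f\|_{{\cM^{\8}_{\Phi_0}(H^d)}'},
$$
rests on the second inequality of Theorem \ref{thm3.1}, whose proof uses the normalization \eqref{3.1} exactly where Lemma \ref{lem3.4} is invoked: one needs
$\|f_{Q}\|_{\ol{\Phi};Q}\le c_{Q}+\fint_{Q}\ol{\Phi}\lt(\Phi'\lt(|f|/\|f\|_{\Phi;Q}\rt)\rt)\,{\rm d}x$
with
$c_{Q}=\fint_{Q}\Phi\lt(|f|/\|f\|_{\Phi;Q}\rt)\,{\rm d}x$,
while Lemma \ref{lem3.4} only provides the bound by
$\max\lt\{1,\fint_{Q}\ol{\Phi}(\cdots)\,{\rm d}x\rt\}$,
which suffices when $c_{Q}=1$ (guaranteed by $\Dl_2$ via \eqref{3.1}) but can fail when $c_{Q}<1$. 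Since $\ol{\Phi}_0\approx e^t-1$ fails $\Dl_2$, and \eqref{3.1} can genuinely fail for exponential Young functions, the paper's appeal to Corollary \ref{cor3.2} is strictly outside that corollary's stated hypotheses, and the paper offers no repair whatsoever. So your diagnosis --- that replacing \eqref{3.1} by the always-true inequality $\le 1$ tilts the argument the wrong way --- is exactly right, and your proposed fixes (redoing the construction of the dual function $F$ for this specific $\ol{\Phi}_0$, or approximating $\ol{\Phi}_0$ from below by $\Dl_2$ Young functions while exploiting that $\Phi_0$ itself is $\Dl_2$ on the Morrey side) go beyond anything in the paper; some such argument is needed to make the paper's one-line proof rigorous.
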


\begin{lem}\label{lem3.4}
Let $\Phi$ be a~Young function.
\begin{itemize}
\item[{\rm(i)}]
$\ds
\|f\|_{\cL^{\Phi}}\le 1
\Longrightarrow 
\int_{\R^n}
\Phi(|f(x)|)\,{\rm d}x
\le
\|f\|_{\cL^{\Phi}};
$
\item[{\rm(ii)}]
$\ds
\|f\|_{\cL^{\Phi}}>1
\Longrightarrow 
\int_{\R^n}
\Phi(|f(x)|)\,{\rm d}x
\ge
\|f\|_{\cL^{\Phi}}.
$
\end{itemize}

\noindent
Therefore,
$$
\|f\|_{\cL^{\Phi}}
\le
\max\lt\{
1,
\int_{\R^n}
\Phi(|f(x)|)\,{\rm d}x
\rt\}.
$$
\end{lem}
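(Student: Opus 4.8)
The plan is to compare the Luxemburg norm $\lm:=\|f\|_{\cL^{\Phi}}$ directly with the modular $\rho(f):=\int_{\R^n}\Phi(|f(x)|)\,{\rm d}x$, using only convexity of $\Phi$ and the fact that the infimum in the definition of $\lm$ is essentially attained. Since $\Phi$ is convex with $\Phi(0)=0$, for every $0\le\theta\le 1$ one has the elementary inequality $\Phi(\theta t)\le\theta\,\Phi(t)$, which I will apply twice: once with $\theta=\lm$ in the regime $\lm\le 1$, and once with $\theta=1/\lm$ in the regime $\lm\ge 1$. I would first record the attainment fact: because $\mu\mapsto\int_{\R^n}\Phi(|f|/\mu)\,{\rm d}x$ is non-increasing, the defining set is an interval unbounded above, so $\int_{\R^n}\Phi(|f|/\mu)\,{\rm d}x\le 1$ for every $\mu>\lm$; letting $\mu\downarrow\lm$ and invoking left-continuity of $\Phi$ together with monotone convergence yields $\int_{\R^n}\Phi(|f|/\lm)\,{\rm d}x\le 1$ whenever $\lm>0$.

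For (i) I assume $\lm\le 1$ (if $\lm=0$ then $f=0$ a.e.\ and the claim is trivial). Writing $|f|=\lm\cdot(|f|/\lm)$ and applying $\Phi(\theta t)\le\theta\Phi(t)$ with $\theta=\lm\le 1$ gives the pointwise bound $\Phi(|f(x)|)\le\lm\,\Phi(|f(x)|/\lm)$. Integrating over $\R^n$ and using the attainment bound $\int_{\R^n}\Phi(|f|/\lm)\,{\rm d}x\le 1$ produces $\rho(f)\le\lm\cdot 1=\|f\|_{\cL^{\Phi}}$, which is (i).

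For (ii) I assume $\lm>1$. For any $\bt$ with $1\le\bt<\lm$, the value $\bt$ lies strictly below the infimum $\lm$, hence $\bt$ is not in the defining set and $\int_{\R^n}\Phi(|f|/\bt)\,{\rm d}x>1$. Since $1/\bt\le 1$, convexity gives $\Phi(|f|/\bt)\le\bt^{-1}\Phi(|f|)$, so $1<\int_{\R^n}\Phi(|f|/\bt)\,{\rm d}x\le\bt^{-1}\rho(f)$, i.e.\ $\rho(f)>\bt$. Letting $\bt\uparrow\lm$ yields $\rho(f)\ge\lm$, which is (ii). The final inequality then follows by cases: if $\lm\le 1$ then $\lm\le 1\le\max\{1,\rho(f)\}$, while if $\lm>1$ then (ii) gives $\rho(f)\ge\lm>1$, so $\max\{1,\rho(f)\}=\rho(f)\ge\lm$.

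I expect no serious obstacle, since this is the classical relation between a Luxemburg norm and its modular. The only step requiring genuine care is the attainment fact $\int_{\R^n}\Phi(|f|/\lm)\,{\rm d}x\le 1$, where one must use left-continuity of the Young function and monotone convergence to pass the uniform bound for $\mu>\lm$ to the limit $\mu=\lm$; once this is in hand, both parts reduce to a single application each of the convexity inequality $\Phi(\theta t)\le\theta\Phi(t)$.
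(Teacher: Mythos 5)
Your proposal is correct and follows essentially the same route as the paper: both parts rest on the convexity inequality $\Phi(\theta t)\le\theta\Phi(t)$ for $0<\theta\le 1$ (equivalently $\theta\Phi(t)\le\Phi(\theta t)$ for $\theta\ge 1$), combined with the modular bound $\int_{\R^n}\Phi(|f|/\lm)\,{\rm d}x\le 1$ for (i) and the strict bound $\int_{\R^n}\Phi(|f|/\bt)\,{\rm d}x>1$ below the infimum for (ii), followed by the limiting argument $\bt\uparrow\lm$. The only difference is cosmetic: you justify the attainment fact $\int_{\R^n}\Phi(|f|/\lm)\,{\rm d}x\le 1$ via left-continuity and monotone convergence, which the paper uses without comment.
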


\begin{proof}
In general, 
for a Young function $\Phi$ 
it should be remarked that 
$$
\lt\{\begin{array}{ll}
\theta\Phi(t)\ge\Phi(\theta t)&\mbox{ if $0<\theta<1$},
\\
\theta\Phi(t)\le\Phi(\theta t)&\mbox{ if $1<\theta<\infty$}.
\end{array}\rt.
$$
Indeed, since $\Phi$ is convex and $\Phi(0)=0$, 
for $0<\theta<1$, 
$$
\Phi(\theta t)
=
\Phi((1-\theta)0+\theta t)
\le
(1-\theta)\Phi(0)+\theta\Phi(t)
=
\theta\Phi(t)
$$
and, for $1<\theta<\8$, 
$$
\theta\Phi(t)
=
\theta\Phi(\theta^{-1}\theta t)
\le
\theta\theta^{-1}\Phi(\theta t)
=
\Phi(\theta t).
$$

When 
$0<\|f\|_{\cL^{\Phi}}\le 1$,
$$
\frac{1}{\|f\|_{\cL^{\Phi}}}
\int_{\R^n}\Phi(|f(x)|)\,{\rm d}x
\le
\int_{\R^n}
\Phi\lt(\frac{|f(x)|}{\|f\|_{\cL^{\Phi}}}\rt)
\,{\rm d}x
\le 1.
$$
When 
$1<\|f\|_{\cL^{\Phi}}<\8$,
for any 
$1<t_0<\|f\|_{\cL^{\Phi}}$, 
$$
t_0
\le
t_0
\int_{\R^n}
\Phi\lt(\frac{|f(x)|}{t_0}\rt)
\,{\rm d}x
\le
\frac{t_0}{t_0}
\int_{\R^n}\Phi(|f(x)|)\,{\rm d}x
=
\int_{\R^n}\Phi(|f(x)|)\,{\rm d}x.
$$
Choosing $t_0$ 
close to $\|f\|_{\cL^{\Phi}}$, 
we have that
$$
\|f\|_{\cL^{\Phi}}
\le
\int_{\R^n}\Phi(|f(x)|)\,{\rm d}x,
$$
Which completes the proof.
\end{proof}

\subsection*{
Estimation of the function
$\cA_{\cS_0}[\1_{Q_0}](x)$}
Let $Q_{0}=[0,1)^{n}$ and let $\cS_0=\{Q_j^k\}$ be the Cantor family. 
(See Section \ref{sec1}.) Then, 
$$
F(x)
:=
\cA_{\cS_0}[\1_{Q_0}](x)
=
\sum_{j, k}\1_{Q_j^k}(x),
\quad x\in\R^n,
$$
is a~sparse operator of the function 
$\1_{Q_0}$ with $\eta=1-(1-\dl)^n$.
We estimate the left hand side of \eqref{3.2} for the function $F$.

In this case, 
one should take $\cD=\cS_0$, and hence, 
the tiling $\cT$ is a~single cube
$Q_0=E^0$. 
Recall that $0<\dl<1$ is the solution to the equation 
$2^{n-d}(1-\dl)^d=1$. 
For later use, we fix $\lm_0>0$ so that
$$
\Lm_0:=e^{\frac1{\lm_0}}(1-\dl)^n<1.
$$
For $\lm>\lm_0$,
we shall estimate
$$
{\rm(i)}:=\int_{Q_0}\ol{\Phi}_0\lt(\frac{F(x)}{\lm}\rt)\,{\rm d}x.
$$
Since we may take
$\ol{\Phi}_{0}(t)=e^{t}-1, t>0$,
it follows that
$$
{\rm(i)}=\sum_{l=1}^{\8}
\frac1{l!}
\int_{Q_0}\lt(\frac{F(x)}{\lm}\rt)^l\,{\rm d}x.
$$
By the layer cake representation,
we see that
$$
\int_{Q_0}\lt(\frac{F(x)}{\lm}\rt)^l\,{\rm d}x
\le\frac{l}{\lm}
\sum_{k=0}^{\8}
\lt(\frac{k+1}{\lm}\rt)^{l-1}
|E^k|.
$$
Noticing that, cf. \eqref{1.2}, 
$$
|E^k|
=
2^{nk} \cdot ((1-\dl)/2)^{nk}
=
(1-\dl)^{nk},
$$
we obtain
\begin{align*}
{\rm(i)}
&\le
\frac1{\lm}
\sum_{k=0}^{\8}
e^{\frac{k+1}{\lm}}|E^k|
\\ &=
\frac{1}{\lm}
\cdot
e^{\frac1{\lm}}
\sum_{k=0}^{\8}
\lt(e^{\frac1{\lm}}(1-\dl)^n\rt)^k
\\ &\le
\frac{1}{\lm}
\cdot
e^{\frac1{\lm_0}}
\sum_{k=0}^{\8}(\Lm_0)^k
\\ &=
\frac{1}{\lm}
\cdot
\frac{e^{\frac1{\lm_0}}}{1-\Lm_0},
\end{align*}
where we have used $\lm>\lm_{0}$
in the last inequality.
If we take 
$\lm=\ds\frac{e^{\frac1{\lm_0}}}{1-\Lm_0}$
so that the right hand side equals to $1$, 
which means that 
$\lm$ majorizes the Luxemburg norm of $F$.


\begin{thebibliography}{999}

\bibitem{Ad} D.~R.~Adams, 
\emph{Choquet integrals in potential theory},
Publ.~Mat. \textbf{42}~(1998), 3--66.

\bibitem{BS}
C.~Bennett and R.~Sharpley, 
\emph{{it Interpolation of Operators}},
Academic Press, 1988.

\bibitem{CPSS} 
M.~Carro, C.~P\'{e}rez, F.~Soria and J.~Soria,
\emph{Maximal functions and the control of weighted inequalities for the fractional integral operator}, 
Indiana Univ. Math. J., \textbf{54}~(2005), no.~3, 627--644. 

\bibitem{Le} A.~K.~Lerner,
\emph{On pointwise estimates involving sparse operators},
New York J. Math., \textbf{22}~(2016), 341--349.

\bibitem{RaRe} M.~M.~Rao and D.~Z.~Ren, 
\emph{{\it Theory of Orlicz Spaces}}, 
Dekker 1991.

\bibitem{ST}
H.~Saito and H.~Tanaka,
\emph{Dual of the Choquet spaces with general Hausdorff content},
Studia Math., \textbf{266}~(2022), 323--335.

\bibitem{STW} 
H.~Saito, H.~Tanaka and T.~Watanabe,
\emph{Abstract dyadic cubes and the dyadic maximal operator with the Hausdorff content},
Bull. Sci Math., \textbf{140}~(2016), 757--773.

\bibitem{SST} 
Y.~Sawano, S.~Sugano and H.~Tanaka, 
\emph{Orlicz-Morrey spaces and fractional operators}, 
Potential Anal. \textbf{36}~(2012), no.~4, 517--556.

\bibitem{OV}
J.~Orobitg and J.~Verdera, 
\emph{Choquet integrals, Hausdorff content and the Hardy-Littlewood maximal  operator}, 
Bull. London Math. Soc. \textbf{30}~(1998), no.~2, 145--150.

\end{thebibliography}
\end{document}